\theoremstyle{plain}
\newtheorem{thm}{Theorem}[section]
\newtheorem*{thm*}{Theorem}
\newtheorem{prop}[thm]{Proposition}
\newtheorem{lem}[thm]{Lemma}
\newtheorem{cor}[thm]{Corollary}
\theoremstyle{definition}
\theoremstyle{remark}
\newtheorem{rem}{Remark}[section]
\newcommand{\ric}{\operatorname{Ric}}
\newcommand{\DS}{\operatorname{BSep}}
\newcommand{\bm}{\partial M}
\title[Concentration of eigenfunctions of the Laplacian]{Concentration
of eigenfunctions of the Laplacian on a closed Riemannian manifold}
\author{Kei Funano}
\address{Division of Mathematics \& Research Center for Pure and Applied Mathematics, Graduate School of Information Sciences, Tohoku University, 6-3-09 Aramaki-Aza-Aoba, Aoba-ku, Sendai 980-8579, Japan}
\email{kfunano@tohoku.ac.jp}
\author{Yohei Sakurai}
\address{Advanced Institute for Materials Research,Tohoku University, 2-1-1 Katahira, Aoba-ku, Sendai, 980-8577, Japan}
\email{yohei.sakurai.e2@tohoku.ac.jp}
\subjclass[2010]{53C21, 53C23}
\keywords{Concentration; Eigenfunctions; Nodal set; Ricci curvature}
\date{October 13, 2018}
\begin{document}
\maketitle

\begin{abstract}
We study concentration phenomena of eigenfunctions of
 the Laplacian on closed Riemannian manifolds. We prove that the volume measure of a closed manifold concentrates around nodal sets of eigenfunctions
 exponentially. Applying the method of Colding and Minicozzi we also
 prove restricted exponential concentration inequalities and restricted Sogge-type $L_p$ moment estimates of eigenfunctions. 
\end{abstract}

\section{Introduction}

 Eigenfunctions of the Laplacian naturally appeared as an important
 object in analysis and geometry (\cite{Z}). Their
 global behavior was vastly investigated in several literature. In this
 paper we study the global feature of eigenfunctions with focus on their concentration properties. 

 Let $(M,g)$ be a closed Riemannian manifold and $\varphi_{\lambda}$ be
 an eigenfunction corresponding to an eigenvalue $\lambda$ of the
 Laplacian on $M$.
 Br\"{u}ning \cite[Proposition 1]{Br} proved that
 there exists some constant $C=C(M,g)$ depending only on $(M,g)$ such that
 the $(C/\sqrt{\lambda}$)-neighborhood of the nodal set $\varphi_{\lambda}^{-1}(0)$ covers the whole manifold $M$ (see also \cite[Theorem 4]{Br2}, \cite[Theorem 4.1]{Z}),
 whose primitive version has been established by Courant (cf. \cite[Chapter V\hspace{-.1em}I, Section 6]{CH}). 
 One might wonder how much the measure of $M$ concentrates around the
 $(r/\sqrt{\lambda})$-neighborhood of $\varphi_{\lambda}^{-1}(0)$ for
 any given $r>0$. One of our main results answers this question. For
 $r>0$ and $\Omega \subset M$, let $B_{r}(\Omega)$ denote the closed $r$-neighborhood of $\Omega$.
We denote by $m_g$ the uniform volume measure on $M$ normalized as
 $m_g(M)=1$, i.e., $m_{g}:= v_g /v_{g}(M)$, where $v_g$ is the
 metric volume measure induced from $g$.

\begin{thm}\label{thm:nodal concentration}
Let $M$ be a closed Riemannian manifold. Then for all $r>0$ we have
\begin{equation*}
m_g(M\setminus B_{r}(\varphi_{\lambda}^{-1}(0))) \leq \exp \bigl(1-\sqrt{\lambda}\,r\bigl).
\end{equation*}
\end{thm}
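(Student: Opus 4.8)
The plan is to localise the problem to the nodal domains of $\varphi_\lambda$ and, on each of them, extract a one–dimensional inequality from the Rayleigh quotient. Let $\{\Omega_i\}_i$ be the connected components of $M\setminus\varphi_\lambda^{-1}(0)$. On each $\Omega_i$ the function $\varphi_\lambda$ has a fixed sign and vanishes on $\partial\Omega_i\subset\varphi_\lambda^{-1}(0)$, so $\varphi_\lambda|_{\Omega_i}$ is a Dirichlet eigenfunction of $\Omega_i$ of definite sign; by the classical fact underlying Courant's nodal domain theorem (an eigenfunction whose eigenvalue is above the bottom of the Dirichlet spectrum is $L^2$-orthogonal to the positive ground state and hence changes sign) it is a ground state, whence $\lambda_1(\Omega_i)=\lambda$, where $\lambda_1$ denotes the first Dirichlet eigenvalue. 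Moreover, if $x\notin B_r(\varphi_\lambda^{-1}(0))$ then $x\in\Omega_i$ for some $i$ and, since $\partial\Omega_i\subset\varphi_\lambda^{-1}(0)$, we have $d(x,\partial\Omega_i)\ge d(x,\varphi_\lambda^{-1}(0))>r$. Thus $M\setminus B_r(\varphi_\lambda^{-1}(0))\subset\bigsqcup_i\{x\in\Omega_i:d(x,\partial\Omega_i)>r\}$, and since the $\Omega_i$ are pairwise disjoint with $\sum_i v_g(\Omega_i)\le v_g(M)$, it suffices to prove that every relatively compact domain $\Omega$ with $\lambda_1(\Omega)=\lambda$ satisfies
\[
 v_g\bigl(\{x\in\Omega:d(x,\partial\Omega)>r\}\bigr) \le e^{1-\sqrt\lambda\,r}\,v_g(\Omega).
\]

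To establish this, put $\rho(x):=d(x,\partial\Omega)$, $R:=\sup_\Omega\rho$, $V(t):=v_g(\{\rho>t\})$, and let $\nu$ be the push-forward of $v_g|_\Omega$ by $\rho$, a finite measure on $(0,R]$ with $\nu((t,R])=V(t)$ and $\nu((0,R])=v_g(\Omega)$. Fix $r\in(0,R)$ (otherwise the left-hand side vanishes) and consider the Lipschitz profile $h$ on $[0,R]$ given by $h(t)=\sinh(\sqrt\lambda\,t)/\sinh(\sqrt\lambda\,r)$ for $t\le r$ and $h(t)=1$ for $t\ge r$. Then $h\circ\rho$ vanishes on $\partial\Omega$, and after approximating by $\max(h\circ\rho-\varepsilon,0)$ (Lipschitz with compact support in $\Omega$) it lies in $W^{1,2}_0(\Omega)$; so the variational characterisation of $\lambda_1(\Omega)=\lambda$ gives $\lambda\int_\Omega(h\circ\rho)^2\le\int_\Omega|\nabla(h\circ\rho)|^2$. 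Since $\rho$ is $1$-Lipschitz we have $|\nabla(h\circ\rho)|=|h'(\rho)|\,|\nabla\rho|\le|h'(\rho)|$ almost everywhere, and transporting both integrals to $\nu$ turns this into the one-dimensional inequality $\lambda\int_{(0,R]}h^2\,d\nu\le\int_{(0,R]}(h')^2\,d\nu$.

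The profile $h$ was chosen to solve $h''=\lambda h$ with $h(0)=0$ on $(0,r)$ and to be constant on $(r,R)$, so the Wronskian-type identity $(h')^2-\lambda h^2\equiv(h'(0))^2=\lambda/\sinh^2(\sqrt\lambda\,r)$ holds on $(0,r)$, while $h'\equiv0$ on $(r,R)$. Splitting $(0,R]=(0,r]\cup(r,R]$ and using $h\equiv1$ on $(r,R]$ together with $\nu((0,r])=v_g(\Omega)-V(r)$, the one-dimensional inequality collapses, after dividing by $\lambda$, to $\sinh^2(\sqrt\lambda\,r)\,V(r)\le v_g(\Omega)-V(r)$, that is $\cosh^2(\sqrt\lambda\,r)\,V(r)\le v_g(\Omega)$. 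Since $1/\cosh^2 y\le1$ for $y\ge0$ and $1/\cosh^2 y\le 4e^{-2y}\le e^{1-y}$ for $y\ge1$, we obtain $V(r)\le v_g(\Omega)/\cosh^2(\sqrt\lambda\,r)\le e^{1-\sqrt\lambda\,r}\,v_g(\Omega)$; summing over $i$ and dividing by $v_g(M)$ proves the theorem.

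The one genuinely non-obvious step is the choice of test function: composing the distance to the nodal set with $\sinh(\sqrt\lambda\,t)$ — the solution of $h''=\lambda h$ vanishing at the boundary — truncated to the constant value $1$ beyond radius $r$ is precisely what forces $(h')^2-\lambda h^2$ to be constant and produces the exact cancellation $\cosh^2-\sinh^2=1$. Everything else is routine: the identity $\lambda_1(\Omega_i)=\lambda$, the membership $h\circ\rho\in W^{1,2}_0(\Omega_i)$ for possibly irregular $\Omega_i$, and the bound $|\nabla\rho|\le1$, which is the sole geometric input and the reason no curvature hypothesis is needed.
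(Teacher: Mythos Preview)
Your proof is correct, and shares the paper's overall architecture: localise to nodal domains $\Omega_i$, use that $\lambda_1^D(\Omega_i)=\lambda$, prove a boundary–concentration estimate on each domain via the Rayleigh quotient with a test function of the form $h\circ d(\cdot,\partial\Omega_i)$, and sum. Where you genuinely diverge from the paper is in the choice of $h$ and the mechanism that extracts the decay. The paper takes the piecewise-linear profile $h(t)=\min\{t/\epsilon,1\}$ shifted to start at level $r$, which yields the recursive inequality $(1+\epsilon^2\lambda)\,m_g(\{\rho>r+\epsilon\})\le m_g(\{\rho>r\})$; setting $\epsilon=\lambda^{-1/2}$ and iterating gives $2^{1-\sqrt{\lambda}\,r}$, hence $e^{1-\sqrt{\lambda}\,r}$ — this is the Gromov--Milman argument. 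You instead pick the ODE-adapted profile $h(t)=\sinh(\sqrt{\lambda}\,t)/\sinh(\sqrt{\lambda}\,r)$ for $t\le r$, $h\equiv 1$ beyond, so that $(h')^2-\lambda h^2$ is constant on $(0,r)$ and the Rayleigh inequality collapses in one step to $V(r)\le v_g(\Omega)/\cosh^{2}(\sqrt{\lambda}\,r)$. This is actually a \emph{stronger} intermediate estimate than the paper's (decay $\sim 4e^{-2\sqrt{\lambda}\,r}$ rather than $e^{1-\sqrt{\lambda}\,r}$), which you then relax to match the stated theorem. What the paper's route buys is robustness — the linear cut-off and iteration need nothing beyond the variational principle and transfer verbatim to rough metric-measure settings — while your route buys sharpness and avoids iteration at the cost of the ODE-specific choice of test function.
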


In \cite{JM} Jakobson and Mangoubi gave upper and lower bounds for the volumes of
tubular neighborhoods at ``subwavelength" scales in the real analytic
setting. Recently Georgiev and Mukherjee \cite{GMu} estimated the volumes of
tubular neighborhoods at ``subwavelength" scales in the smooth
setting. In Remark \ref{comp} we compare Theorem \ref{thm:nodal
concentration} with these two results and with the Br\"{u}ning result.

Let $\ric_g$ denote the infimum of the Ricci curvature over $M$.
Under a lower Ricci curvature bound,
we also consider how much the eigenfunction $\varphi_{\lambda}$
concentrates to zero and obtain the following exponential concentration
inequality on large subsets:
\begin{thm}\label{thm:eigenfunction concentration}
Let $M$ be an $n$-dimensional closed Riemannian manifold with $\ric_g
 \geq -(n-1)$. Then there exists a constant $\mathcal{C}_{n}>0$ depending only on $n$ such that
the following holds:
If the eigenvalue $\lambda$ is at least $ \mathcal{C}_{n}$,
then for every $\xi \in (0,1)$,
there is a Borel subset $\Omega=\Omega_{\varphi_{\lambda},\xi} \subset M$ with $m_g(\Omega) \geq 1-\xi$ such that
\begin{equation}\label{eq:eigenfunction concentration}
m_g\bigl(\Omega \cap \{\vert \varphi_{\lambda} \vert >r \}\bigl)\leq \exp \left( 1-\frac{C_{n}\, \sqrt{\xi}\, }{\Vert \varphi_{\lambda} \Vert_{2}}\,r \right)
\end{equation}
for every $r>0$,
where $C_{n}>0$ is a constant depending only on $n$,
and $\Vert \cdot \Vert_{2}$ denotes the standard $L_{2}$ norm on $(M,m_g)$.
\end{thm}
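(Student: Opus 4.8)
The plan is to build $\Omega$ as a set on which $\varphi_{\lambda}$ is pinched near its nodal set $\varphi_{\lambda}^{-1}(0)$, and then to feed the resulting estimate on the distance to $\varphi_{\lambda}^{-1}(0)$ into Theorem \ref{thm:nodal concentration}. I would need two interior estimates for $\varphi_{\lambda}$, both of Colding--Minicozzi type and both valid with constants depending only on $n$ since $\ric_{g}\geq-(n-1)$: a mean value inequality,
\begin{equation*}
\varphi_{\lambda}(y)^{2}\leq \frac{C_{n}}{v_{g}(B_{\rho}(y))}\int_{B_{\rho}(y)}\varphi_{\lambda}^{2}\,dv_{g}\qquad\text{for }\rho\lesssim\lambda^{-1/2}
\end{equation*}
(the usual factor $e^{C_{n}\sqrt{\lambda}\rho}$ being bounded because $\sqrt{\lambda}\rho\asymp1$), and the Cheng--Yau interior gradient estimate $\sup_{B_{\rho/2}(y)}|\nabla\varphi_{\lambda}|^{2}\leq C_{n}(\lambda+\rho^{-2})\sup_{B_{\rho}(y)}\varphi_{\lambda}^{2}$. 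Here and below $C_{n}$ denotes a dimensional constant whose value may change from line to line. Fix the scale $\rho:=c_{n}/\sqrt{\lambda}$ chosen so that, by Br\"uning's covering result \cite{Br} recalled in the introduction --- whose constant is dimensional once $\ric_{g}\geq-(n-1)$ and $\lambda\geq\mathcal{C}_{n}$, by comparing $\lambda$ with the first Dirichlet eigenvalue of a ball of radius $\rho$ via Cheng's eigenvalue comparison --- every point of $M$ lies within distance $\rho$ of $\varphi_{\lambda}^{-1}(0)$. Fix $\xi\in(0,1)$, put $A:=K_{n}\|\varphi_{\lambda}\|_{2}/\sqrt{\xi}$ for a dimensional constant $K_{n}$ to be chosen below, and define
\begin{equation*}
\Omega:=\Omega_{\varphi_{\lambda},\xi}:=\bigl\{x\in M:\ |\varphi_{\lambda}|\leq 2A\ \text{on}\ B_{2\rho}(x)\bigr\}.
\end{equation*}

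First I would check that $m_{g}(\Omega)\geq1-\xi$. By construction $M\setminus\Omega=B_{2\rho}\bigl(\{|\varphi_{\lambda}|>2A\}\bigr)$. If $x$ belongs to this set, choose $y$ with $d(x,y)\leq2\rho$ and $\varphi_{\lambda}(y)^{2}>4A^{2}$; applying the mean value inequality at $y$, replacing $B_{\rho}(y)$ by the larger ball $B_{3\rho}(x)$, and using Bishop--Gromov volume doubling at scales $\lesssim\rho\leq1$ to compare the two volumes, one obtains $\mathcal{M}(\varphi_{\lambda}^{2})(x)\geq c_{n}A^{2}$, where $\mathcal{M}$ is the Hardy--Littlewood maximal operator on $(M,d,v_{g})$ with radii restricted to $\lesssim\rho$. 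Hence $M\setminus\Omega\subset\{\mathcal{M}(\varphi_{\lambda}^{2})\geq c_{n}A^{2}\}$, and the weak-$(1,1)$ bound for $\mathcal{M}$ (with dimensional constant, again by Bishop--Gromov) gives
\begin{equation*}
m_{g}(M\setminus\Omega)\leq \frac{C_{n}}{A^{2}}\,\|\varphi_{\lambda}\|_{2}^{2}=\frac{C_{n}}{K_{n}^{2}}\,\xi,
\end{equation*}
which is $\leq\xi$ once $K_{n}$ is chosen large enough depending only on $n$.

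Next I would establish the inclusion $\Omega\cap\{|\varphi_{\lambda}|>r\}\subset M\setminus B_{r/(C_{n}\sqrt{\lambda}\,A)}\bigl(\varphi_{\lambda}^{-1}(0)\bigr)$. Let $x\in\Omega$ with $|\varphi_{\lambda}(x)|>r$, let $y_{0}\in\varphi_{\lambda}^{-1}(0)$ realize $\ell:=d(x,\varphi_{\lambda}^{-1}(0))\leq\rho$, and let $\gamma\colon[0,\ell]\to M$ be a minimizing geodesic from $x$ to $y_{0}$. For every $t$ we have $B_{\rho}(\gamma(t))\subset B_{2\rho}(x)$, on which $|\varphi_{\lambda}|\leq2A$, so the interior gradient estimate gives $|\nabla\varphi_{\lambda}(\gamma(t))|\leq C_{n}\sqrt{\lambda+\rho^{-2}}\,A\leq C_{n}\sqrt{\lambda}\,A$. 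Integrating,
\begin{equation*}
r<|\varphi_{\lambda}(x)|=\bigl|\varphi_{\lambda}(x)-\varphi_{\lambda}(y_{0})\bigr|\leq\int_{0}^{\ell}|\nabla\varphi_{\lambda}(\gamma(t))|\,dt\leq C_{n}\sqrt{\lambda}\,A\,\ell,
\end{equation*}
so $d(x,\varphi_{\lambda}^{-1}(0))=\ell>r/(C_{n}\sqrt{\lambda}\,A)$, which is the inclusion. Applying Theorem \ref{thm:nodal concentration} with radius $r/(C_{n}\sqrt{\lambda}\,A)$ then yields
\begin{equation*}
m_{g}\bigl(\Omega\cap\{|\varphi_{\lambda}|>r\}\bigr)\leq\exp\Bigl(1-\frac{r}{C_{n}A}\Bigr)=\exp\Bigl(1-\frac{\sqrt{\xi}}{C_{n}K_{n}}\cdot\frac{r}{\|\varphi_{\lambda}\|_{2}}\Bigr),
\end{equation*}
which is \eqref{eq:eigenfunction concentration} with the constant there equal to $1/(C_{n}K_{n})$.

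The step I expect to be the main obstacle is the control of the exceptional set $M\setminus\Omega$. Imposing that $|\varphi_{\lambda}|$ be small on an entire small ball $B_{2\rho}(x)$, rather than merely that $|\nabla\varphi_{\lambda}(x)|$ be small, is precisely what legitimizes the geodesic-path argument above: the minimizing geodesic from $x$ to $\varphi_{\lambda}^{-1}(0)$ then stays inside that ball. But this forces one to estimate the $2\rho$-thickening of $\{|\varphi_{\lambda}|>2A\}$, and thickening can inflate measure wildly for thin sets, so the naive choice $\Omega=\{|\nabla\varphi_{\lambda}|\leq L\}$ is useless. Converting the pointwise bound $\varphi_{\lambda}(y)^{2}>4A^{2}$ into the averaged bound $\mathcal{M}(\varphi_{\lambda}^{2})(x)\gtrsim A^{2}$ through the mean value inequality --- so that the dimensional weak-$(1,1)$ estimate becomes available --- is exactly where the Colding--Minicozzi machinery is indispensable. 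A subsidiary, bookkeeping-type point is that all auxiliary constants (the Br\"uning radius, the mean value and gradient estimates, the volume doubling constants) must be taken to depend only on $n$, which is where the hypotheses $\ric_{g}\geq-(n-1)$ and $\lambda\geq\mathcal{C}_{n}$ are used.
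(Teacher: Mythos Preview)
Your argument is correct and reaches the same conclusion as the paper, but the construction of the good set $\Omega$ is organized differently. The paper follows Colding--Minicozzi more literally: it applies a mean value inequality directly to $\Vert\nabla\varphi_{\lambda}\Vert^{2}$ (their Lemma~3.2, via the Bochner formula), takes a maximal disjoint family of balls $\{B_{R}(x_{i})\}$ centered on the nodal set, declares an index $i$ ``good'' when the average of $\Vert\nabla\varphi_{\lambda}\Vert^{2}$ over $B_{20R}(x_{i})$ is at most $C_{n}\lambda\xi^{-1}\Vert\varphi_{\lambda}\Vert_{2}^{2}$, and sets $\Omega$ to be the union of the good $B_{3R}(x_{i})$. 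The bad set is then handled by a bounded-multiplicity counting argument (a discrete Chebyshev inequality) rather than by the maximal operator. Your route instead controls $\varphi_{\lambda}$ in $L^{\infty}$ on small balls via the mean value inequality for $\varphi_{\lambda}^{2}$, upgrades this to a gradient bound through a Cheng--Yau interior estimate, and bounds the bad set with the weak-$(1,1)$ inequality for the Hardy--Littlewood maximal function. The trade-off: the paper needs only one elliptic estimate (the gradient mean value inequality) and an elementary covering/pigeonhole step, so it is slightly more self-contained; your approach uses two elliptic estimates plus the maximal theorem but gives a cleaner pointwise description of $\Omega$ and avoids the discrete selection of a cover. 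Both yield the same dependence on $n$ and $\xi$, and both plug into Theorem~\ref{thm:nodal concentration} in exactly the same way to finish.
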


\begin{rem}
The inequality (\ref{eq:eigenfunction concentration}) is meaningful for large $r>0$.
In fact,
if $r\leq \Vert \varphi_{\lambda} \Vert_{2} (C_{n} \sqrt{\xi})^{-1}$,
then (\ref{eq:eigenfunction concentration}) is trivial since the left hand side is at most $1$.
The Chebyshev inequality implies that (without the curvature assumption and the largeness of $\lambda$)
for every $r>0$,
\begin{equation*}
m_g\bigl(\{\vert \varphi_{\lambda} \vert >r \}\bigl)\leq \frac{\Vert \varphi_{\lambda} \Vert^{2}_{2}}{r^{2}}.
\end{equation*}
The inequality (\ref{eq:eigenfunction concentration}) gives a refinement of this inequality with respect to the decay order of the upper bound as $r\to \infty$ on a large subset $\Omega$.
\end{rem}

From the above theorem the standard argument yields the following.
\begin{cor}\label{cor:lp concentration}
Under the same setting and notation of Theorem \ref{thm:eigenfunction concentration},
there exists a constant $\mathcal{C}_{n}>0$ such that the following holds:
If $\lambda \geq \mathcal{C}_{n}$,
then for every $\xi \in (0,1)$,
there is a Borel subset $\Omega=\Omega_{\varphi_{\lambda},\xi} \subset M$ with $m_g(\Omega) \geq 1-\xi$ such that
\begin{equation}\label{eq:lp concentration}
\left(\int_{\Omega}\,\vert \varphi_{\lambda} \vert^{p}\,dm_g \right)^{\frac{1}{p}} \leq e\,\Gamma(p+1)^{\frac{1}{p}}\,\frac{\Vert \varphi_{\lambda}\Vert_{2}}{C_{n}\,\sqrt{\xi}}
\end{equation}
for any $p\geq 1$,
where $\Gamma$ is the gamma function.
\end{cor}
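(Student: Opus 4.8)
The plan is to insert the tail estimate (\ref{eq:eigenfunction concentration}) into the layer-cake (Cavalieri) formula for $L_p$ norms. Fix $\xi \in (0,1)$, take $\lambda$ at least the constant $\mathcal{C}_n$ from Theorem \ref{thm:eigenfunction concentration}, and let $\Omega = \Omega_{\varphi_\lambda,\xi}$ be the Borel subset with $m_g(\Omega) \ge 1 - \xi$ furnished by that theorem. Abbreviate $c := C_n \sqrt{\xi}\,/\,\Vert \varphi_\lambda \Vert_2 > 0$, so that (\ref{eq:eigenfunction concentration}) reads $F(r) \le e^{1 - cr} = e\, e^{-cr}$ for all $r > 0$, where $F(r) := m_g(\Omega \cap \{\vert\varphi_\lambda\vert > r\})$ is the distribution function of $\vert\varphi_\lambda\vert$ relative to the restricted measure $m_g|_\Omega$; of course $F(r) \le 1$ as well, though the exponential bound alone suffices.

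Next, for $p \ge 1$ I would apply the layer-cake formula to $m_g|_\Omega$ (justified by Tonelli's theorem, the measure being finite):
\[
\int_\Omega \vert\varphi_\lambda\vert^p \, dm_g = \int_0^\infty p\, r^{p-1} F(r)\, dr \le e\, p \int_0^\infty r^{p-1} e^{-cr}\, dr = e\, p\, \frac{\Gamma(p)}{c^p} = \frac{e\, \Gamma(p+1)}{c^p},
\]
where the substitution $s = cr$ gives $\int_0^\infty r^{p-1} e^{-cr}\, dr = c^{-p}\Gamma(p)$ and $p\,\Gamma(p) = \Gamma(p+1)$. Taking $p$-th roots and substituting the value of $c$,
\[
\left( \int_\Omega \vert\varphi_\lambda\vert^p \, dm_g \right)^{1/p} \le e^{1/p}\, \Gamma(p+1)^{1/p}\, \frac{\Vert \varphi_\lambda \Vert_2}{C_n \sqrt{\xi}} \le e\, \Gamma(p+1)^{1/p}\, \frac{\Vert \varphi_\lambda \Vert_2}{C_n \sqrt{\xi}},
\]
the last inequality using $e^{1/p} \le e$ for $p \ge 1$. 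This is precisely (\ref{eq:lp concentration}), and one may take the same $\mathcal{C}_n$ as in Theorem \ref{thm:eigenfunction concentration}.

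Since every step is elementary, I do not expect a genuine obstacle: the only points deserving a word are the measurability and finiteness hypotheses needed to apply the layer-cake formula on $\Omega$ (immediate here) and the cosmetic bound $e^{1/p} \le e$ used to make the constant in (\ref{eq:lp concentration}) independent of $p$. One could instead retain $e^{1/p}$ for a marginally sharper statement, but the stated form is cleaner.
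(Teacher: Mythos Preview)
Your proof is correct and follows essentially the same route as the paper: apply Theorem \ref{thm:eigenfunction concentration} to obtain $\Omega$ and the exponential tail bound, feed this into the layer-cake formula, evaluate the resulting Gamma integral via the substitution $s=cr$, and finish with $p\,\Gamma(p)=\Gamma(p+1)$ together with $e^{1/p}\le e$. The only cosmetic difference is that the paper first writes Cavalieri's principle for $\vert\varphi_\lambda\vert^p$ and then changes variables $r=t^{1/p}$ to reach your form $\int_0^\infty p\,r^{p-1}F(r)\,dr$, whereas you quote that form directly.
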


\begin{rem}
The Stirling formula tells us that
the ratio of the right hand side of (\ref{eq:lp concentration}) to $p \,\Vert \varphi_{\lambda}\Vert_{2}\,(C_{n}\,\sqrt{\xi})^{-1}$ tends to $1$ as $p\to \infty$.
\end{rem}

Let us mention the result due to Sogge. 
In \cite{So} (see also \cite[Theorem 9.2]{Z}) Sogge obtained a global $L_p$ moment estimate
for eigenfunctions: $\| \varphi_{\lambda}\|_p \leq
O(\lambda^{\delta(n,p)})\| \varphi_{\lambda}\|_{2}$, where
$\delta(n,p)$ is a function of $n$ and $p$. This inequality is
sharp for the round sphere $\mathbb{S}^n$. The crucial
point of Corollary \ref{cor:lp concentration} is that once we restrict
the eigenfunction $\varphi_{\lambda}$ on some large subset $\Omega$
then we do not need the $\lambda$-term of the Sogge inequality to bound the $L_p$ moment of
$\varphi_{\lambda}1_{\Omega}$ in terms of $\| \varphi_{\lambda}\|_2$.

\section{Dirichlet eigenvalues}\label{sec:Dirichlet eigenvalues}
Throughout this section,
let $(M,g)$ be a connected compact Riemannian manifold with boundary.
We denote by $\bm$ its boundary and by $d_g$ the Riemannian distance.

\subsection{Key estimates}
The key ingredient of the proof of our main results is the following
concentration inequality around the boundary in terms of the first
Dirichlet eigenvalue of the Laplacian. In the proof we closely
follows the argument of Gromov and Milman (\cite{GM}, \cite[Theorem
3.1]{L}). Their context was the first nontrivial eigenvalue on a closed manifold, and Neumann eigenvalue.
\begin{prop}\label{prop:exponential decay estimate}
For every $r>0$ we have
\begin{equation}\label{eq:exponential decay estimate}
m_g(M\setminus B_{r}(\bm)) \leq \exp \bigl(1-\sqrt{\lambda^{D}_{1}(M)}\,r \bigl),
\end{equation}
where $\lambda^{D}_{1}(M)$ is the first Dirichlet eigenvalue of the Laplacian on $M$.
\end{prop}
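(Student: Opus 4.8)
The plan is to follow the Gromov–Milman semigroup argument, adapted to the Dirichlet setting. Let $u$ be a first Dirichlet eigenfunction, normalized so that $u>0$ on the interior of $M$, $u|_{\bm}=0$, and $\int_M u^2\,dm_g=1$; write $\lambda:=\lambda_1^D(M)$, so $\Delta u=\lambda u$ with the analyst's sign convention. The key observation is that $u$ decays away from the boundary in a quantitative way: by the eigenvalue equation one shows that $\log u$ is controlled along geodesics emanating from $\bm$, so that $u(x)$ is bounded below in terms of $\mathrm{dist}\,(x,\bm)$ only after integrating against the heat semigroup. More precisely, I would consider the heat semigroup $P_t=e^{t\Delta}$ associated to the Dirichlet Laplacian and exploit that $P_t u=e^{-\lambda t}u$, together with the fact that $P_t$ has a contraction/positivity property that lets one compare $P_t(1_{M\setminus B_r(\bm)})$ with $u$ on the far set.

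The concrete steps are as follows. \textbf{Step 1:} Normalize $u\ge 0$, $\|u\|_2=1$, $\Delta u=\lambda u$, $u|_{\bm}=0$. \textbf{Step 2:} On the set $A_r:=M\setminus B_r(\bm)$, obtain a \emph{lower} bound $u\ge c_r$ for a suitable constant — this is where the distance to the boundary enters. The cleanest route is to run the semigroup: for $x\in A_r$ and $t>0$, the Dirichlet heat kernel satisfies $p_t(x,y)\le$ the free heat kernel, and one gets $e^{-\lambda t}u(x)=(P_tu)(x)=\int_M p_t(x,y)u(y)\,dm_g(y)\le \|u\|_\infty\int_M p_t(x,y)\,dm_g(y)\le \|u\|_\infty$, which is the wrong direction; instead I would argue via the \emph{lower} bound for $u$ by comparing with the one-dimensional Dirichlet problem on an interval of length $r$ transverse to $\bm$, giving $u(x)/\|u\|_\infty\ge$ something like $\sin$ of a small angle — but the sharp exponential comes not from $\sin$ but from iterating. \textbf{Step 3 (the real mechanism):} Apply $P_t$ to the indicator $1_{A_r}$ and use that, since points of $A_r$ are at distance $\ge r$ from $\bm$, finite speed of heat propagation (or rather, the Gaussian kernel bound $p_t(x,y)\le C t^{-n/2}e^{-d(x,y)^2/(5t)}$ valid on a \emph{compact} manifold-with-boundary uniformly) forces $(P_t 1_{A_r})(x)$ to be exponentially small in $r^2/t$ whenever $x\in\bm$ or near it; pairing this with $u$ and using $P_tu=e^{-\lambda t}u$, self-adjointness gives
\begin{equation*}
e^{-\lambda t}\int_{A_r}u\,dm_g=\int_M (P_t 1_{A_r})\,u\,dm_g,
\end{equation*}
and estimating the right side by splitting $M$ into $B_{r/2}(\bm)$ and its complement yields $m_g(A_r)\le$ (roughly) $e^{\lambda t}e^{-r^2/(Ct)}$; optimizing over $t$ gives $\exp(c-\sqrt{\lambda}\,r)$. \textbf{Step 4:} Track the constants carefully to get exactly the constant $1$ in the exponent, i.e. the bound $\exp(1-\sqrt{\lambda}\,r)$; this is a matter of choosing the optimal $t=r/(2\sqrt{\lambda})$ and being slightly generous in the trivial regime $\sqrt{\lambda}\,r\le 1$ where the right-hand side exceeds $1$ and there is nothing to prove.

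The main obstacle is \textbf{Step 3}: obtaining a clean, constant-explicit inequality relating $m_g(M\setminus B_r(\bm))$ to the semigroup action without incurring manifold-dependent constants. The Gromov–Milman argument on a closed manifold uses only the spectral gap and the Markov/positivity property of $P_t$ and nothing about geometry, so the right framework is to phrase everything in terms of $P_t$ as a positivity-preserving contraction on $L^\infty$ with $P_t 1\le 1$ (true in the Dirichlet case as well, by the maximum principle), and the eigenfunction identity $P_tu=e^{-\lambda t}u$ — then the distance enters through the observation that if $x\in B_{r}(\bm)$ is \emph{not} true, i.e. $x\in A_r$, then any path from $x$ into the region where $u$ is comparable to $\|u\|_2$ has length $\ge r$, which via a chaining/iteration of the semigroup over $\lceil \sqrt\lambda\, r\rceil$ time-steps of size $1/\sqrt\lambda$ produces the exponential. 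I expect the final write-up to run the iteration on the function $f:=u/\|u\|_\infty$ rather than on indicators, using that $\{f>s\}$ shrinks geometrically as one moves away from $\bm$, which is exactly the form in which the Gromov–Milman/Ledoux lemma is stated in \cite[Theorem 3.1]{L}.
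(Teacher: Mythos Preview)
Your proposal is not a proof but a plan, and the plan has a genuine gap that you yourself flag: Step~3 relies on an off-diagonal Gaussian heat-kernel bound $p_t(x,y)\le Ct^{-n/2}e^{-d(x,y)^2/(5t)}$, and any such bound on a general compact manifold with boundary carries constants depending on the geometry. Optimizing $e^{\lambda t}e^{-r^2/(Ct)}$ in $t$ yields $\exp\bigl(-2C^{-1/2}\sqrt{\lambda}\,r\bigr)$, so the decay rate you would obtain is $2/\sqrt{C}$ rather than the universal $1$, and there is no mechanism in your sketch to eliminate the $C$. The later suggestion to ``iterate on $f=u/\Vert u\Vert_\infty$'' is not developed enough to evaluate; in particular you never establish the pointwise lower bound on $u$ over $A_r$ that such an argument would need, and as you note in Step~2 the natural semigroup inequalities go the wrong way.

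There is also a misattribution that is leading you astray: the Gromov--Milman argument in \cite{GM} and \cite[Theorem~3.1]{L} is \emph{not} a semigroup argument. It is an elementary variational iteration, and that is exactly what the paper does. One fixes $\epsilon>0$, builds the Lipschitz cutoff
\[
\varphi(x):=\min\Bigl\{\epsilon^{-1}d_g\bigl(x,B_r(\bm)\bigr),\,1\Bigr\},
\]
which vanishes on $B_r(\bm)$ (hence on $\bm$), equals $1$ on $M\setminus B_{r+\epsilon}(\bm)$, and has $\Vert\nabla\varphi\Vert\le\epsilon^{-1}$. Plugging $\varphi$ into the Rayleigh quotient for $\lambda_1^D(M)$ gives
\[
\bigl(1+\epsilon^2\lambda_1^D(M)\bigr)\,m_g\bigl(M\setminus B_{r+\epsilon}(\bm)\bigr)\le m_g\bigl(M\setminus B_r(\bm)\bigr).
\]
Choosing $\epsilon=\lambda_1^D(M)^{-1/2}$ makes the factor $2$, and iterating $l=\lfloor\sqrt{\lambda_1^D(M)}\,r\rfloor$ times from $r=0$ gives $m_g(M\setminus B_r(\bm))\le 2^{-l}\le 2^{\,1-\sqrt{\lambda_1^D(M)}\,r}\le e^{\,1-\sqrt{\lambda_1^D(M)}\,r}$; for $\sqrt{\lambda_1^D(M)}\,r\le 1$ the bound is at least $1$ and is trivial. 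No eigenfunction, no heat kernel, no geometry-dependent constants enter. I recommend abandoning the semigroup route and writing up this test-function iteration instead.
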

\begin{proof}
First,
we show that
for all $\epsilon,\,r>0$
we have
\begin{equation}\label{eq:relative volume comparison and eigenvalue}
(1+\epsilon^{2}\,\lambda^{D}_{1}(M))\,m_g(M\setminus B_{r+\epsilon}(\bm)) \leq m_g(M\setminus B_{r}(\bm)).
\end{equation}
We set $\Omega_{1}:=B_{r}(\bm),\,\Omega_{2}:=M\setminus B_{r+\epsilon}(\bm)$,
and put $v_{\alpha}:=m_g(\Omega_{\alpha})$ for each $\alpha =1,2$.
Let us define a Lipschitz function $\varphi:M\to \mathbb{R}$ by
\begin{equation*}
\varphi(x):=\min \left\{ \frac{1}{\epsilon} \,\,d_g(x,\Omega_{1}),\,\,1  \right\}.
\end{equation*}
The function $\varphi$ satisfies the following properties:
\begin{enumerate}\setlength{\itemsep}{+1.5mm}
\item $\varphi \equiv 0$ on $\Omega_{1}$, and $\varphi \equiv 1$ on $\Omega_{2}$;
\item $\Vert \nabla \varphi \Vert \leq \epsilon^{-1}$ $m_g$-almost everywhere on $M$,
\end{enumerate}
where $\nabla \varphi$ denotes the gradient of $\varphi$,
and $\Vert \cdot \Vert$ denotes the canonical norm induced from the Riemannian metric on $M$.
It follows that
\begin{equation*}
\int_{M}\,\varphi^{2}\,dm_g \geq v_{2},\quad \int_{M}\,\Vert \nabla \varphi \Vert^{2}\,dm_g \leq \frac{1}{\epsilon^{2}}\,(1-v_{1}-v_{2}).
\end{equation*}
The min-max principle leads us to
\begin{equation*}
\lambda^{D}_{1}(M)\leq \frac{\int_{M}\, \Vert \nabla \varphi \Vert^{2}\,d\,m_g}{\int_{M}\, \varphi^{2}\,d\,m_g} \leq \frac{1}{\epsilon^{2}\,v_{2}}\,(1-v_{1}-v_{2}).
\end{equation*}
This implies (\ref{eq:relative volume comparison and eigenvalue}).

Now,
let us prove (\ref{eq:exponential decay estimate}).
We put $\epsilon_{0}:=\lambda^{D}_{1}(M)^{-\frac{1}{2}}$.
We first consider the case where $r \in (0,\epsilon_{0})$.
Let $l \geq 1$ denote the integer determined by $\epsilon_{0}\,r^{-1} \in [(l+1)^{-1},l^{-1})$.
Using the inequality (\ref{eq:relative volume comparison and eigenvalue}) $l$ times,
we arrive at
\begin{align*}
m_g(M\setminus B_{r}(\bm)) &\leq m_g(M\setminus B_{l\,\epsilon_{0}}(\bm))\\
                                                    &\leq 2^{-1}\,m_g(M\setminus B_{(l-1)\,\epsilon_{0}}(\bm))\leq 2^{-l} \leq 2^{1-\frac{r}{\epsilon_{0}}}.
\end{align*}
This proves (\ref{eq:exponential decay estimate}).
In the case where $r \in [\epsilon_{0},\infty)$,
it holds that
\begin{equation*}
m_g(M\setminus B_{r}(\bm)) \leq \exp \left(1-\frac{r}{\epsilon_{0}}\right)
\end{equation*}
since the right hand side is at least $1$.
Therefore,
we conclude (\ref{eq:exponential decay estimate}).
\end{proof}

\subsection{Boundary separation distances}
We call $X=(X,d_{X},\mu_{X})$ a \textit{metric measure space with boundary}
when $X$ is a connected complete Riemannian manifold with boundary,
$d_{X}$ is the Riemannian distance,
and $\mu_{X}$ is a Borel probability measure on $X$.
Let $X=(X,d_{X},\mu_{X})$ be a metric measure space with boundary,
and let $k\geq 1$ be an integer.
For positive numbers $\eta_{1},\dots,\eta_{k}>0$,
we denote by $\mathcal{S}_{X}(\eta_{1},\dots,\eta_{k})$ the set of all sequences $\{\Omega_{\alpha}\}^{k}_{\alpha=1}$ of Borel subsets $\Omega_{\alpha}$ with $\mu_{X}(\Omega_{\alpha})\geq \eta_{\alpha}$.
For a sequence $\{\Omega_{\alpha}\}^{k}_{\alpha=1} \in \mathcal{S}_{X}(\eta_{1},\dots,\eta_{k})$,
we define
\begin{equation*}
\mathcal{D}_{X}\bigl(\{\Omega_{\alpha}\}^{k}_{\alpha=1}\bigl):=\min \left \{\, \min_{\alpha \neq \beta} d_{X}(\Omega_{\alpha},\Omega_{\beta}),\,\, \min_{\alpha} d_{X}(\Omega_{\alpha},\partial X)  \,\right\}.
\end{equation*}
The author \cite{S} has introduced the \textit{$(\eta_{1},\dots,\eta_{k})$-boundary separation distance $\DS(X;\eta_{1},\dots,\eta_{k})$ of $X$} as follows (see Definition 3.2 in \cite{S}):
If $\mathcal{S}_{X}(\eta_{1},\dots,\eta_{k}) \neq \emptyset$,
then 
\begin{equation*}
\DS(X;\eta_{1},\dots,\eta_{k}):=\sup\,\mathcal{D}_{X}\bigl(\{\Omega_{\alpha}\}^{k}_{\alpha=1}\bigl),
\end{equation*}
where the supremum is taken over all $\{\Omega_{\alpha}\}^{k}_{\alpha=1} \in \mathcal{S}_{X}(\eta_{1},\dots,\eta_{k})$;
otherwise,
$\DS(X;\eta_{1},\dots,\eta_{k}):=0$.
The second author \cite{S} has presented the following relation with the
Dirichlet eigenvalue:
\begin{lem}[{\cite[Lemma 4.1]{S}}]\label{lem:Dirichlet eigenvalue and Dirichlet separation distance}
For all $\eta_{1},\dots,\eta_{k}>0$,
we have
\begin{equation*}
\DS\left((M,d_g,m_g);\eta_{1},\dots,\eta_{k}\right)\leq \frac{2}{\sqrt{\lambda^{D}_{k}(M)\, \min_{\alpha=1,\dots,k} \eta_{\alpha}}},
\end{equation*}
where $\lambda^{D}_{k}(M)$ is the $k$-th Dirichlet eigenvalue of the Laplacian on $M$.
\end{lem}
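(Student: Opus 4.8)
The plan is to play the variational (min--max) characterisation of the $k$-th Dirichlet eigenvalue $\lambda^{D}_{k}(M)$ against a family of $k$ Lipschitz ``tent'' functions with \emph{pairwise disjoint} supports, one concentrated on each $\Omega_{\alpha}$ and vanishing in a neighbourhood of $\bm$; this is the natural several-set version of the single test function used in the proof of Proposition \ref{prop:exponential decay estimate}. Write $X:=(M,d_g,m_g)$. If $\mathcal{S}_{X}(\eta_{1},\dots,\eta_{k})=\emptyset$ then $\DS(X;\eta_{1},\dots,\eta_{k})=0$ by definition and there is nothing to prove, so fix an arbitrary $\{\Omega_{\alpha}\}_{\alpha=1}^{k}\in\mathcal{S}_{X}(\eta_{1},\dots,\eta_{k})$ and set $D:=\mathcal{D}_{X}(\{\Omega_{\alpha}\}_{\alpha=1}^{k})$; we may assume $D>0$. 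It suffices to show $D\le 2/\sqrt{\lambda^{D}_{k}(M)\,\min_{\alpha}\eta_{\alpha}}$ and then take the supremum over all admissible sequences.

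For $\alpha=1,\dots,k$, define $\varphi_{\alpha}\colon M\to\mathbb{R}$ by
\[
\varphi_{\alpha}(x):=\max\Bigl\{\,1-\frac{2}{D}\,d_g(x,\Omega_{\alpha}),\ 0\,\Bigr\}.
\]
Each $\varphi_{\alpha}$ is $(2/D)$-Lipschitz, equals $1$ on $\Omega_{\alpha}$, and is supported in $\overline{U_{\alpha}}$, where $U_{\alpha}:=\{x\in M:d_g(x,\Omega_{\alpha})<D/2\}$. Since $D\le d_g(\Omega_{\alpha},\Omega_{\beta})$ for $\alpha\ne\beta$, the triangle inequality forces $U_{\alpha}\cap U_{\beta}=\emptyset$; and since $D\le d_g(\Omega_{\alpha},\bm)$, one has $\overline{U_{\alpha}}\cap\bm=\emptyset$, so $\varphi_{\alpha}$ is a Lipschitz function with compact support in the interior of $M$ and therefore lies in $H^{1}_{0}(M)$. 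Consequently $\varphi_{1},\dots,\varphi_{k}$ are linearly independent and span a $k$-dimensional subspace $V\subset H^{1}_{0}(M)$. For any $f=\sum_{\alpha}c_{\alpha}\varphi_{\alpha}\in V\setminus\{0\}$, the disjointness of the supports gives $f^{2}=\sum_{\alpha}c_{\alpha}^{2}\varphi_{\alpha}^{2}$ and $\Vert\nabla f\Vert^{2}=\sum_{\alpha}c_{\alpha}^{2}\Vert\nabla\varphi_{\alpha}\Vert^{2}$ $m_g$-almost everywhere, whence
\[
\frac{\int_{M}\Vert\nabla f\Vert^{2}\,dm_g}{\int_{M}f^{2}\,dm_g}
=\frac{\sum_{\alpha}c_{\alpha}^{2}\int_{M}\Vert\nabla\varphi_{\alpha}\Vert^{2}\,dm_g}{\sum_{\alpha}c_{\alpha}^{2}\int_{M}\varphi_{\alpha}^{2}\,dm_g}
\le\max_{\alpha}\frac{\int_{M}\Vert\nabla\varphi_{\alpha}\Vert^{2}\,dm_g}{\int_{M}\varphi_{\alpha}^{2}\,dm_g}.
\]
By the min--max principle, $\lambda^{D}_{k}(M)$ is bounded above by this maximum.

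It remains to estimate each Rayleigh quotient. On the one hand $\int_{M}\varphi_{\alpha}^{2}\,dm_g\ge m_g(\Omega_{\alpha})\ge\eta_{\alpha}$, since $\varphi_{\alpha}\equiv1$ on $\Omega_{\alpha}$. On the other hand $\Vert\nabla\varphi_{\alpha}\Vert\le2/D$ $m_g$-almost everywhere and $\varphi_{\alpha}$ vanishes off $U_{\alpha}$, so $\int_{M}\Vert\nabla\varphi_{\alpha}\Vert^{2}\,dm_g\le (2/D)^{2}\,m_g(U_{\alpha})\le 4/D^{2}$. Hence $\lambda^{D}_{k}(M)\le\max_{\alpha}4/(D^{2}\eta_{\alpha})=4/(D^{2}\min_{\alpha}\eta_{\alpha})$, which is exactly $D\le 2/\sqrt{\lambda^{D}_{k}(M)\,\min_{\alpha}\eta_{\alpha}}$; taking the supremum over $\{\Omega_{\alpha}\}_{\alpha=1}^{k}$ finishes the proof. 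The argument is essentially routine; the only place where the precise shape of $\mathcal{D}_{X}$ matters is that the term $d_g(\Omega_{\alpha},\bm)$ must appear in it, for that is exactly what makes the test functions admissible for the \emph{Dirichlet} spectrum, and the only point needing a line of care is checking that the sets $U_{\alpha}$ are pairwise disjoint so that the Rayleigh quotient of a general element of $V$ is controlled by the maximum over $\alpha$.
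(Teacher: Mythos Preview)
The paper does not prove this lemma; it merely quotes it from \cite[Lemma~4.1]{S}, so there is no in-paper argument to compare against. Your proof is correct and is the standard Chung--Grigor'yan--Yau style argument, suitably adapted so that the test functions vanish near $\bm$ and hence lie in $H^{1}_{0}(M)$; this is almost certainly the proof given in \cite{S} as well. Two minor remarks: the \emph{open} sets $U_{\alpha}$ are what you actually show to be pairwise disjoint (the closures $\overline{U_{\alpha}}$ could touch when $d_g(\Omega_{\alpha},\Omega_{\beta})=D$ exactly), but this is all you need since $\{\varphi_{\alpha}>0\}\subset U_{\alpha}$ and $\nabla\varphi_{\alpha}=0$ $m_g$-a.e.\ on $\{\varphi_{\alpha}=0\}$; and the verification that $\overline{U_{\alpha}}\cap\bm=\emptyset$ uses $D/2<D$, which is fine under your standing assumption $D>0$.
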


By applying Proposition \ref{prop:exponential decay estimate} to our setting,
we obtain the following refined estimate in the case where $k=1$:
\begin{thm}\label{thm:logarithmic Dirichlet eigenvalue and Dirichlet separation distance}
For every $\eta>0$,
we have
\begin{equation*}
\DS \left((M,d_g,m_g);\eta\right)\leq \frac{1}{\sqrt{\lambda^{D}_{1}(M)}} \log \frac{e}{\eta}.
\end{equation*}
\end{thm}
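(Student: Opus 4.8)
The plan is to unravel the definition of $\DS$ in the case $k=1$ and reduce directly to Proposition~\ref{prop:exponential decay estimate}. When $k=1$ the cross-term $\min_{\alpha\neq\beta}d_X(\Omega_\alpha,\Omega_\beta)$ is vacuous, so for a single Borel set $\Omega$ with $m_g(\Omega)\geq\eta$ one has $\mathcal{D}_{(M,d_g,m_g)}(\{\Omega\})=d_g(\Omega,\bm)$, and $\DS((M,d_g,m_g);\eta)$ is the supremum of $d_g(\Omega,\bm)$ over all such $\Omega$. If no such $\Omega$ exists the left-hand side is $0$ by definition and there is nothing to prove, so I may assume $\mathcal{S}_{(M,d_g,m_g)}(\eta)\neq\emptyset$.

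First I would fix one competitor $\Omega$ with $m_g(\Omega)\geq\eta$ and set $r:=d_g(\Omega,\bm)$. By definition of the distance to the boundary, every point of $\Omega$ has distance at least $r$ from $\bm$; since $B_s(\bm)=\{x\in M: d_g(x,\bm)\leq s\}$, this yields the inclusion $\Omega\subset M\setminus B_s(\bm)$ for every $s<r$. Applying Proposition~\ref{prop:exponential decay estimate} with radius $s$ then gives
\begin{equation*}
\eta\leq m_g(\Omega)\leq m_g(M\setminus B_s(\bm))\leq \exp\bigl(1-\sqrt{\lambda^{D}_{1}(M)}\,s\bigr).
\end{equation*}
Letting $s\uparrow r$ and using continuity of the exponential, I obtain $\eta\leq\exp(1-\sqrt{\lambda^{D}_{1}(M)}\,r)$. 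Taking logarithms and rearranging gives $\sqrt{\lambda^{D}_{1}(M)}\,r\leq 1-\log\eta=\log(e/\eta)$, i.e. $d_g(\Omega,\bm)=r\leq \lambda^{D}_{1}(M)^{-1/2}\log(e/\eta)$. Since this bound does not depend on $\Omega$, taking the supremum over all admissible $\Omega$ finishes the proof.

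I do not anticipate a genuine obstacle: the statement is essentially a logarithmic repackaging of Proposition~\ref{prop:exponential decay estimate}. The only points requiring a little care are the vacuous cross-term when $k=1$, the degenerate case $\mathcal{S}_{(M,d_g,m_g)}(\eta)=\emptyset$, and the passage from the strict inequality $s<r$ to the limit $s=r$ — one cannot simply substitute $s=r$, since a competitor $\Omega$ may contain points lying exactly at distance $r$ from $\bm$, so that $\Omega$ need not be contained in the \emph{open} tube complement $M\setminus B_r(\bm)$.
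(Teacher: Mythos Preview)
Your proposal is correct and follows essentially the same approach as the paper: both arguments reduce to bounding $d_g(\Omega,\bm)$ for an arbitrary admissible $\Omega$ via Proposition~\ref{prop:exponential decay estimate}, and both handle the borderline radius by a limiting argument. The only cosmetic difference is the direction of the limit: the paper fixes the threshold $r_0=\lambda^D_1(M)^{-1/2}(1-\log\eta)$ and lets $r\downarrow r_0$ after observing that $B_r(\bm)$ must meet $\Omega$, whereas you fix $r=d_g(\Omega,\bm)$ and let $s\uparrow r$ inside the inclusion $\Omega\subset M\setminus B_s(\bm)$.
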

\begin{proof}
We may assume that
the left hand side is positive.
Fix a Borel subset $\Omega\subset M$ with $m_g(\Omega)\geq \eta$.
From Proposition \ref{prop:exponential decay estimate},
for every $r>0$ with $r>\lambda^{D}_{1}(M)^{-\frac{1}{2}}(1-\log \eta)$,
we derive $m_g(B_{r}(\bm)) > 1-\eta$;
in particular,
$B_{r}(\bm)\cap \Omega \neq \emptyset$ and $d_g(\Omega,\bm)\leq r$.
By letting $r\to \lambda^{D}_{1}(M)^{-\frac{1}{2}}(1-\log \eta)$,
we obtain $d_g(\Omega,\bm) \leq \lambda^{D}_{1}(M)^{-\frac{1}{2}}(1-\log \eta)$.
Since $\Omega$ is arbitrary,
we complete the proof.
\end{proof}

\section{Proof of the main results}\label{sec:Proof of the main results}
In this section,
we will prove the main results.
In what follows,
let $M$ be an $n$-dimensional closed Riemannian manifold and
$\varphi_{\lambda}$ be an eigenfunction corresponding to an eigenvalue $\lambda$ of the Laplacian on $M$.
For a Borel subset $\Omega\subset M$,
let $m_{\Omega}$ stand for the normalized volume measure on $\Omega$ defined as
\begin{equation}\label{eq:normalized measure}
m_{\Omega}:=\frac{1}{v_g(\Omega)}\,v_g|_{\Omega},
\end{equation}where $v_g$ is the volume measure of $M$.

\subsection{Proof of Theorem \ref{thm:nodal concentration}}
Let us prove Theorem \ref{thm:nodal concentration}.
\begin{proof}[Proof of Theorem \ref{thm:nodal concentration}]
We fix a nodal domain $\Omega$ of $\varphi_{\lambda}$, i.e., a connected
 component of $M\setminus \varphi_{\lambda}^{-1}(0)$.
Applying the same argument in the proof of Proposition \ref{prop:exponential decay estimate} to $\Omega$,
we see
\begin{equation*}
\frac{m_g(\Omega \setminus B_{r}(\partial \Omega))}{m_g(\Omega)} =    m_{\Omega}(\Omega \setminus B_{r}(\partial \Omega))\leq \exp\bigl(1-\sqrt{\lambda^{D}_{1}(\Omega)}\,r\bigl),
\end{equation*}
where $\partial \Omega$ is the boundary of $\Omega$,
and $\lambda^{D}_{1}(\Omega)$ is the first Dirichlet eigenvalue of the Laplacian on $\Omega$.
It is well-known that
$\lambda^{D}_{1}(\Omega)$ is equal to $\lambda$ (\cite[Lemma 1 in Chapter 1]{C}).
Hence,
\begin{equation}\label{eq:application of Dirichlet eigenvalue estimate}
m_g(\Omega \setminus B_{r}(\partial \Omega)) \leq \exp\bigl(1-\sqrt{\lambda}\,r\bigl)\,m_g(\Omega).
\end{equation}

We now decompose the set $M \setminus \varphi^{-1}_{\lambda}(0)$ into the nodal domains $\Omega_{\alpha}$ of $\varphi_{\lambda}$ as $M \setminus \varphi^{-1}_{\lambda}(0)=\sqcup_{\alpha}\, \Omega_{\alpha}$.
From (\ref{eq:application of Dirichlet eigenvalue estimate})
we derive
\begin{align*}
m_g(M\setminus B_{r}(\varphi^{-1}_{\lambda}(0))) &=     \sum_{\alpha}\,m_g(\Omega_{\alpha} \setminus B_{r}(\partial \Omega_{\alpha}))\\
                                                                 &\leq  \exp\bigl(1-\sqrt{\lambda}\,r\bigl)\,      \sum_{\alpha}\,m_g(\Omega_{\alpha}).
\end{align*}
By $\sum_{\alpha}\,m_g(\Omega_{\alpha})\leq 1$,
we obtain the desired inequality.
\end{proof}

\begin{rem}\label{comp}\upshape
 Let us compare our estimate of the volumes of tubular neighborhoods of
 the nodal sets 
 \begin{align}\label{our ineq}
  1-\exp \bigl(1-\sqrt{\lambda}\,r\bigl)\leq m_{g}(B_{r}(\varphi_{\lambda}^{-1}(0))) 
  \end{align}with the other known results. 

 First recall that the nodal set is $C/\sqrt{\lambda}$-dense for some
 constant $C=C(M,g)$. Thus the right-hand side of (\ref{our ineq}) is always
 $1$ whenever $r\geq C/\sqrt{\lambda}$. For $r\leq 1/\sqrt{\lambda}$ the
 left-hand side of (\ref{our ineq}) is at most zero. Thus the inequality
 (\ref{our ineq}) has meanings only in the range
 $1/\sqrt{\lambda}<r<C/\sqrt{\lambda}$ and is meaningless unless $C=C(M,g)>1$.
 There are several proofs for the $C/\sqrt{\lambda}$-denseness of the
 nodal set (see \cite{Br, CM, CM2, HL, Lo, M} and consult \cite{Z, Z1} for example), however to the author's
 knowledge, no explicit quantitative estimate for the constant $C=C(M,g)$ are known.
 In the case of the $n$-dimensional unit round sphere $\mathbb{S}^n$ the first nontrivial
 eigenvalue $n$ is given by restriction of $n+1$ coordinate
 projections and hence $C(\mathbb{S}^{n})\geq \pi \sqrt{n} /2>1$. 

 In \cite[Theorem 1.2]{JM} following the work of Donnelly and Fefferman \cite{DF} Jakobson and Mangoubi proved that if $(M,g)$ is real analytic, then
\begin{align}\label{tubu ineq}
C_1\sqrt{\lambda}r\leq v_g(B_r(\varphi_{\lambda}^{-1}(0)))\leq C_2\sqrt{\lambda}r
\end{align}provided that $r\leq C_3 /\sqrt{\lambda} $, where $C_1,C_2,C_3$
are constants depending on $(M,g)$,
and $v_{g}$ is the (unnormalized) metric volume measure induced from $g$.
 No explicit quantitative estimate for the constants
 $C_1,C_2,C_3$ were given in their proof (and hence we cannot compare
 the left-most inequality in (\ref{tubu ineq}) with our inequality (\ref{our ineq})). One reason for this is that they focused on
 local neighborhoods so that the metric can be developed in power series.

 In the smooth setting, modifying the work of Logunov \cite{Lo, Lo2} and
 Logunov and Malinnikova \cite{LM}, Georgiev and Mukherjee \cite[Theorem
 1.2]{GMu} proved the following: For any sufficiently small
 $\varepsilon>0$ it holds
 \begin{align*}
  C_1\lambda^{1/2-\varepsilon}r \leq
  v_g(B_r(\varphi_{\lambda}^{-1}(0)))\leq C_2 \lambda^{\kappa}r
  \end{align*}for any $r\in (0,r_0/\sqrt{\lambda})$, where
 $r_0=r_0(M,g)$, $C_1=C_1(M,g,\varepsilon)$, $C_2=C_2(M,g)$, and
 $\kappa=\kappa(M,g)$ are some constants. Again no explicit
 quantitative estimate for the constants
 $r_0,C_1,C_2$ were given in their proof but they remarked that their
 constant $C_1$ goes to zero as $\varepsilon$ tends to zero.
 \end{rem}

\subsection{Proof of Theorem \ref{thm:eigenfunction concentration}}
In the present subsection,
we will give a proof of Theorem \ref{thm:eigenfunction
concentration} by applying Theorem \ref{thm:nodal concentration}.
In order to apply the theorem we need to control the gradient of
an eigenfunction on large subsets on $M$. To do so we shall follow the
argument of Colding and Minicozzi in \cite{CM}.

We first recall the following (see the proof of \cite[Theorem 1.1]{CM}):
\begin{lem}[\cite{CM}]\label{lem:net of nodal set}
If $\ric_g \geq -(n-1)$,
then there is a constant $C_{n}>0$ depending only on $n$ such that $M=B_{R}(\varphi^{-1}_{\lambda}(0))$,
where $R:=C_{n}\,\lambda^{-\frac{1}{2}}$.
\end{lem}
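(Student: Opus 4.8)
The plan is to reproduce the covering argument from the proof of \cite[Theorem 1.1]{CM}, which rests on two ingredients: a mean-value-type inequality forcing a positive eigenfunction on a ball to have large $L^2$ mass on a slightly smaller ball, and a volume comparison on the other side. Concretely, suppose for contradiction that some point $x_0 \in M$ has $d_g(x_0, \varphi_\lambda^{-1}(0)) > R$ with $R = C_n \lambda^{-1/2}$ for a constant $C_n$ to be chosen. Then $\varphi_\lambda$ has a fixed sign on the geodesic ball $B_R(x_0)$; without loss of generality $\varphi_\lambda > 0$ there. The first step is to invoke the local elliptic estimate (a consequence of the Bochner formula together with $\ric_g \geq -(n-1)$, i.e. the Li--Schoen / Cheng--Yau gradient estimate, or the iteration used in \cite{CM}) which shows that on a ball $B_\rho(x_0)$ of radius comparable to $\lambda^{-1/2}$ where $\varphi_\lambda$ does not vanish, one has a lower bound of the form
\begin{equation*}
\sup_{B_{\rho/2}(x_0)} \varphi_\lambda \geq c_n \, \fint_{B_{\rho}(x_0)} \varphi_\lambda \, dv_g
\end{equation*}
combined with a Harnack-type inequality controlling $\sup_{B_{\rho/2}(x_0)}\varphi_\lambda$ from above by $\inf_{B_{\rho/2}(x_0)}\varphi_\lambda$ up to a dimensional constant (again using $\ric_g \geq -(n-1)$ and $\rho \lesssim \lambda^{-1/2}$, so the rescaled equation has bounded coefficients).

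The second step is to combine this with the global normalization. Integrating the Harnack inequality over all of $M$ and using that $\int_M \varphi_\lambda \, dv_g = 0$ (eigenfunctions for $\lambda \neq 0$ are orthogonal to constants) produces a contradiction once $R$, hence $C_n$, is large enough: the positivity and the lower bound on $B_{\rho/2}(x_0)$ together with the volume comparison $v_g(B_{\rho/2}(x_0)) \geq c_n \, v_g(M) \cdot (\text{something})$ — more precisely, one chains balls along minimizing geodesics from $x_0$ to any other point and uses the Bishop--Gromov inequality with $\ric_g \geq -(n-1)$ to transfer the lower bound, forcing $\varphi_\lambda$ to be bounded below by a positive constant times $\sup|\varphi_\lambda|$ on a definite fraction of $M$, which is incompatible with mean zero. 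The number of balls in the chain is controlled by $\diam(M)$, but the curvature bound $\ric_g \geq -(n-1)$ makes the relevant constants depend only on $n$ after the correct scaling by $\lambda^{-1/2}$; this is exactly the point where Colding--Minicozzi's argument yields a dimensional constant.

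The main obstacle is making the constant genuinely depend on $n$ alone and not on $\diam(M)$ or $v_g(M)$: naively the chaining argument loses a factor per ball, and the number of balls grows with the diameter. The resolution, following \cite{CM}, is that one does not chain all the way across $M$; instead one shows directly that a positive eigenfunction on a single ball $B_R(x_0)$ with $R \gtrsim \lambda^{-1/2}$ cannot exist because the eigenvalue equation $\Delta \varphi_\lambda = \lambda \varphi_\lambda$ on that ball, rescaled to unit size, has first Dirichlet eigenvalue of order $\lambda R^2$, and if $\varphi_\lambda > 0$ on $B_R(x_0)$ then $\lambda \geq \lambda_1^D(B_R(x_0))$, while Cheng's eigenvalue comparison (using $\ric_g \geq -(n-1)$) gives $\lambda_1^D(B_R(x_0)) \geq c_n / R^2 = c_n \lambda / C_n^2$; choosing $C_n^2 > c_n$ yields the contradiction. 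So the argument is really local and the only curvature input is Cheng's lower bound for the Dirichlet eigenvalue of a metric ball, which depends only on $n$ and the radius once $\ric_g \geq -(n-1)$. I would write it in this cleaner form rather than via chaining.
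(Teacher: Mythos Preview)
The paper does not give its own proof of this lemma; it is quoted from the proof of \cite[Theorem~1.1]{CM}. So the question is simply whether your argument is correct.

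The route you settle on in the last paragraph is the standard one, but both key inequalities point the wrong way, and with your signs no contradiction is ever reached. If $\varphi_\lambda$ has no zero on $B_R(x_0)$ then $B_R(x_0)$ lies inside a nodal domain $\Omega$, and domain monotonicity of Dirichlet eigenvalues gives
\[
\lambda = \lambda_1^D(\Omega) \leq \lambda_1^D(B_R(x_0)),
\]
the reverse of what you wrote. Likewise, Cheng's comparison theorem under $\ric_g \geq -(n-1)$ yields an \emph{upper} bound $\lambda_1^D(B_R(x_0)) \leq \lambda_1^D\bigl(B_R(\mathbb{H}^n)\bigr)$, not a lower one. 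With your stated directions, the chain $\lambda \geq \lambda_1^D(B_R(x_0)) \geq c_n/R^2 = c_n\lambda/C_n^2$ only says $C_n^2 \geq c_n$, which is trivially satisfied and contradicts nothing. Flipping both inequalities gives the correct chain $\lambda \leq \lambda_1^D(B_R(x_0)) \leq \lambda_1^D\bigl(B_R(\mathbb{H}^n)\bigr)$; since the right-hand side is an explicit decreasing function of $R$ depending only on $n$, this forces $R \leq C_n \lambda^{-1/2}$ once $\lambda$ exceeds a dimensional threshold, which is all the paper needs (the lemma is only invoked under $\lambda \geq \mathcal{C}_n$). The earlier paragraphs on Harnack inequalities and ball-chaining are, as you yourself concede, not the right mechanism here and should be dropped.
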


In the proof of \cite[Theorem 1.1]{CM} Colding and Minicozzi have also obtained the following fact by
combining the mean value inequality with the Bochner formula:
\begin{lem}[\cite{CM}]\label{lem:gradient estimate}
If $\ric_g \geq -(n-1)$,
then there exists a constant $C_{n}>0$ depending only on $n$ such that for all $x \in M$ and $r \in (0,1]$,
the supremum of $\Vert \nabla \varphi_{\lambda} \Vert^{2}$ over $B_{r}(x)$ is at most
\begin{equation*}
\exp\left(C_{n}\,\bigl(1+r \,\sqrt{2(\lambda+n-1)}\bigl)\right)\,\int_{B_{2r}(x)}\,\Vert \nabla \varphi_{\lambda} \Vert^{2}\,dm_{B_{2r}(x)},
\end{equation*}
where $m_{B_{2r}(x)}$ is the normalized measure on $B_{2r}(x)$ defined as $(\ref{eq:normalized measure})$.
\end{lem}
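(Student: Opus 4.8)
The plan is to recover this from the Bochner formula together with a mean value inequality of Li--Schoen type for nonnegative subsolutions of a Schr\"odinger-type operator under a lower Ricci bound; this is exactly the route indicated in \cite{CM}. Throughout put $w:=\Vert\nabla\varphi_{\lambda}\Vert^{2}$, which is a smooth nonnegative function on $M$ since $\varphi_{\lambda}$ is smooth. The heart of the matter is to turn $w$ into a subsolution and then apply a sup-to-average estimate on $B_{2r}(x)$.

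First I would extract a differential inequality for $w$. Normalize the sign of the Laplacian so that $\Delta=-\Div\nabla$ has nonnegative spectrum (consistently with $\lambda^{D}_{1}(\Omega)=\lambda$ used above), so that $\Div\nabla\varphi_{\lambda}=-\lambda\,\varphi_{\lambda}$. The Bochner formula then gives
\[
\tfrac{1}{2}\,\Div\nabla w=\Vert\Hess\varphi_{\lambda}\Vert^{2}-\lambda\,w+\ric(\nabla\varphi_{\lambda},\nabla\varphi_{\lambda}),
\]
where $\ric$ denotes the Ricci tensor. Discarding the nonnegative term $\Vert\Hess\varphi_{\lambda}\Vert^{2}$ and using the hypothesis $\ric_{g}\ge-(n-1)$, which forces $\ric(\nabla\varphi_{\lambda},\nabla\varphi_{\lambda})\ge-(n-1)\,w$, one obtains on all of $M$
\[
\Div\nabla w\ \ge\ -2(\lambda+n-1)\,w,
\]
i.e.\ $w\ge0$ is a smooth (hence also weak) subsolution of $\Div\nabla u+K\,u=0$ with $K:=2(\lambda+n-1)\ge0$.

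Second, I would invoke the mean value inequality: if $\ric_{g}\ge-(n-1)$ and $w\ge0$ satisfies $\Div\nabla w\ge-K\,w$ on $B_{2r}(x)$ with $K\ge0$ and $r\in(0,1]$, then there is $C_{n}>0$ depending only on $n$ with
\[
\sup_{B_{r}(x)}w\ \le\ \exp\!\bigl(C_{n}(1+r\sqrt{K}\,)\bigr)\int_{B_{2r}(x)}w\,dm_{B_{2r}(x)}.
\]
This is the Li--Schoen mean value inequality; its proof is a Moser iteration on geodesic balls, in which the Bishop--Gromov volume comparison supplies the volume doubling property and the Neumann--Poincar\'e inequality on balls (valid under a lower Ricci bound) controls each iteration step, while the factor $\exp(C_{n}r\sqrt{K}\,)$ arises precisely from summing over the iteration the contributions of the zeroth-order term $K\,w$. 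Taking $K=2(\lambda+n-1)$ replaces $r\sqrt{K}$ by $r\sqrt{2(\lambda+n-1)}$ and, after absorbing a harmless multiplicative constant $C_{n}$ into the exponential (using $r\le1$), yields the stated bound.

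The main obstacle is the mean value inequality with exactly this exponential dependence on $r\sqrt{K}$, rather than the cruder $\exp(C_{n}Kr^{2})$: one must run the Moser iteration carefully enough to see that the potential term enters the exponent only linearly in $r\sqrt{K}$ — equivalently, one may first reduce to balls of radius comparable to $1/\sqrt{K}$, on which the zeroth-order term is absorbed into universal constants, and then chain together $O(r\sqrt{K})$ such local estimates. The remaining points are routine bookkeeping: the consistent choice of sign convention for the Laplacian, the fact that $w$ is genuinely smooth so no regularity subtleties arise, and the passage between the normalized measure $m_{B_{2r}(x)}$ and the metric volume measure.
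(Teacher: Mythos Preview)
Your proposal is correct and follows exactly the route the paper indicates: the paper does not give its own proof of this lemma but cites it from \cite{CM}, noting only that it is obtained ``by combining the mean value inequality with the Bochner formula.'' Your argument---Bochner to get $\Div\nabla w\ge -2(\lambda+n-1)w$, then the Li--Schoen mean value inequality under the Ricci lower bound---is precisely that combination.
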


In order to prove Theorem \ref{thm:eigenfunction concentration},
we show the following assertion based on the above two lemmas.
 The idea goes back to Colding and Minicozzi (\cite[Theorem 1.1]{CM}).
\begin{lem}\label{lem:neighborhood of nodal set}
We assume $\ric_g \geq -(n-1)$.
Then there exists a constant $\mathcal{C}_{n}>0$ depending only on $n$ such that
the following holds:
If $\lambda \geq \mathcal{C}_{n}$,
then for every $\xi \in (0,1)$,
there exists a Borel subset $\Omega=\Omega_{\varphi_{\lambda},\xi} \subset M$ with $m_g(\Omega) \geq 1-\xi$ such that
\begin{equation}\label{eq:neighborhood of nodal set}
\Omega \cap B_{r}(\varphi^{-1}_{\lambda}(0)) \subset \Omega \cap \left\{ \vert \varphi_{\lambda} \vert \leq C_{n}\, \sqrt{\frac{\lambda}{\xi}}\,   \Vert \varphi_{\lambda}\Vert_{2}  \,r  \right\}
\end{equation}
for every $r>0$,
where $C_{n}>0$ is a constant depending only on $n$.
\end{lem}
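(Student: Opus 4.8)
The plan is to follow Colding and Minicozzi: I would produce a large set on which the local maximal function of $\|\nabla\varphi_{\lambda}\|$ over balls of radius $R:=C_{n}\lambda^{-1/2}$ is controlled, and then integrate $\varphi_{\lambda}$ along minimizing geodesics to the nodal set. Concretely, for $x\in M$ put
$F(x):=\sup_{B_{R}(x)}\|\nabla\varphi_{\lambda}\|$.
If $x\in B_{r}(\varphi^{-1}_{\lambda}(0))$, then since also $x\in B_{R}(\varphi^{-1}_{\lambda}(0))$ by Lemma \ref{lem:net of nodal set}, there is $y\in\varphi^{-1}_{\lambda}(0)$ with $d_{g}(x,y)\le\min\{r,R\}$; integrating $\varphi_{\lambda}$ along a minimizing geodesic from $x$ to $y$ (whose points all lie within $d_{g}(x,y)\le R$ of $x$, hence inside $B_{R}(x)$) gives $|\varphi_{\lambda}(x)|=|\varphi_{\lambda}(x)-\varphi_{\lambda}(y)|\le d_{g}(x,y)\,F(x)\le r\,F(x)$. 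Thus it suffices to establish a bound $\int_{M}F^{2}\,dm_{g}\le A_{n}\,\lambda\,\|\varphi_{\lambda}\|_{2}^{2}$ with $A_{n}$ depending only on $n$, valid once $\lambda\ge\mathcal{C}_{n}$. Granting this, Chebyshev's inequality applied to the (Borel, in fact continuous) function $F^{2}$ shows that $\Omega:=\{x\in M: F(x)\le \sqrt{A_{n}\lambda/\xi}\,\|\varphi_{\lambda}\|_{2}\}$ has $m_{g}(\Omega)\ge 1-\xi$, and the displayed inclusion then holds with $C_{n}:=\sqrt{A_{n}}$.

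To prove the $L^{2}$ bound on $F$, I would first fix $\mathcal{C}_{n}$ large enough (in terms of $n$) that $\lambda\ge\mathcal{C}_{n}$ forces $4R\le 1$ and $R\sqrt{2(\lambda+n-1)}=C_{n}\sqrt{2(1+(n-1)/\lambda)}$ bounded by a dimensional constant. Then Lemma \ref{lem:gradient estimate} with radius $R$ gives $F(x)^{2}\le B_{n}\,v_{g}(B_{2R}(x))^{-1}\int_{B_{2R}(x)}\|\nabla\varphi_{\lambda}\|^{2}\,dv_{g}$, with $B_{n}$ depending only on $n$. Integrating in $x$ against $m_{g}=v_{g}/v_{g}(M)$ and applying Fubini (using $z\in B_{2R}(x)\Leftrightarrow x\in B_{2R}(z)$) yields
\[
\int_{M}F^{2}\,dm_{g}\le \frac{B_{n}}{v_{g}(M)}\int_{M}\|\nabla\varphi_{\lambda}(z)\|^{2}\Bigl(\int_{B_{2R}(z)}\frac{dv_{g}(x)}{v_{g}(B_{2R}(x))}\Bigr)\,dv_{g}(z).
\]
For $x\in B_{2R}(z)$ one has $B_{2R}(z)\subseteq B_{4R}(x)$, so the Bishop--Gromov inequality under $\ric_{g}\ge-(n-1)$ (valid since $4R\le 1$) gives $v_{g}(B_{2R}(x))\ge c_{n}\,v_{g}(B_{4R}(x))\ge c_{n}\,v_{g}(B_{2R}(z))$ for a dimensional $c_{n}>0$, whence the inner integral is at most $c_{n}^{-1}$. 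Therefore $\int_{M}F^{2}\,dm_{g}\le (B_{n}/c_{n})\int_{M}\|\nabla\varphi_{\lambda}\|^{2}\,dm_{g}=(B_{n}/c_{n})\,\lambda\,\|\varphi_{\lambda}\|_{2}^{2}$, the last equality by the eigenvalue equation and integration by parts on the closed manifold $M$. This is the desired inequality with $A_{n}=B_{n}/c_{n}$.

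I expect the main obstacle to be exactly this averaging step: turning the purely local estimate of Lemma \ref{lem:gradient estimate}, whose right-hand side carries the awkward weight $v_{g}(B_{2R}(x))^{-1}$, into a clean global $L^{2}$ bound. This is where the lower Ricci bound is genuinely used---through Bishop--Gromov volume comparison, to control $\int_{B_{2R}(z)}v_{g}(B_{2R}(x))^{-1}\,dv_{g}(x)$ uniformly in $z$ (equivalently, one could instead run a bounded-overlap covering of $M$ by $R$-balls and sum the local estimates). The remaining ingredients---the geodesic estimate for $|\varphi_{\lambda}(x)|$ on $B_{r}(\varphi^{-1}_{\lambda}(0))$, the passage to the sublevel set of $F$ via Chebyshev, and the bookkeeping of the thresholds defining $\mathcal{C}_{n}$---are routine.
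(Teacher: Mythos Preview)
Your proof is correct and follows the same Colding--Minicozzi strategy as the paper: control $\sup\|\nabla\varphi_{\lambda}\|$ on balls of scale $R\sim\lambda^{-1/2}$ via the mean-value inequality (Lemma~\ref{lem:gradient estimate}), use the Ricci lower bound to globalize, and then integrate along geodesics to the nodal set. The implementations differ in a way you yourself anticipate. The paper works discretely: it takes a maximal disjoint family of $R$-balls centered on $\varphi_{\lambda}^{-1}(0)$, uses Bishop--Gromov to bound the overlap of the enlarged $20R$-balls, declares an index ``good'' if the local Dirichlet energy average satisfies the threshold \eqref{eq:integral bound}, and sets $\Omega$ to be the union of the good $3R$-balls; the measure bound on the bad set comes from summing local energies against the global energy $\lambda\|\varphi_{\lambda}\|_{2}^{2}$, and the geodesic argument then splits into the cases $r\le 3R$ and $r>3R$. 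You instead work continuously with the maximal function $F(x)=\sup_{B_{R}(x)}\|\nabla\varphi_{\lambda}\|$, replacing the bounded-overlap summation by the Fubini/Bishop--Gromov estimate for $\int_{B_{2R}(z)}v_{g}(B_{2R}(x))^{-1}\,dv_{g}(x)$ and the ``good index'' selection by Chebyshev on $F^{2}$. Your version is a bit sleeker---it avoids the case split on $r$ and gives $\Omega$ directly as a sublevel set---while the paper's covering version stays closer to the original argument in \cite{CM} and makes the role of the nodal set (the ball centers lie on it) slightly more explicit.
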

\begin{proof}
By Lemma \ref{lem:net of nodal set},
there exists a constant $C_{1,n}>0$ depending only on $n$ such that $M=B_{R}(\varphi^{-1}_{\lambda}(0))$,
where $R:=C_{1,n}\,\lambda^{-\frac{1}{2}}$.
We define $\mathcal{C}_{n}:=\max\{(10\,C_{1,n})^{2},n-1\}$,
and suppose $\lambda\geq \mathcal{C}_{n}$.
Then $10R \in (0,1]$,
and hence
Lemma \ref{lem:gradient estimate} implies that
there is a constant $C_{2,n}>0$ depending only on $n$ such that for every $x \in M$,
the supremum of $\Vert \nabla \varphi_{\lambda} \Vert^{2}$ over $B_{10R}(x)$ is smaller than or equal to
\begin{equation*}
\exp \left(C_{2,n}\,\bigl(1+10R\,\sqrt{2(\lambda+n-1)}\bigl)\right)\int_{B_{20R}(x)} \, \Vert \nabla \varphi_{\lambda} \Vert^{2} \, dm_{B_{20R}(x)}.
\end{equation*}
From $\lambda\geq n-1$,
it follows that
for every $x\in M$
we have
\begin{equation}\label{eq:application of gradient estimate}
\sup_{B_{10R}(x)}\,\Vert \nabla \varphi_{\lambda} \Vert^{2}\leq C_{3,n}\,\int_{B_{20R}(x)} \, \Vert \nabla \varphi_{\lambda} \Vert^{2} \, dm_{B_{20R}(x)},
\end{equation}
where $C_{3,n}:=\exp (C_{2,n}\,(1+20\, C_{1,n}))$.

Let us take a maximal family $\{B_{R}(x_{i})\}_{i \in I}$ of disjoint balls centered at $\varphi^{-1}_{\lambda}(0)$.
The maximality leads to $\varphi^{-1}_{\lambda}(0)\subset \bigcup_{i\in I}B_{2R}(x_{i})$;
in particular,
\begin{equation}\label{eq:maximality}
M=B_{R}(\varphi^{-1}_{\lambda}(0))=\bigcup_{i\in I}B_{3R}(x_{i}).
\end{equation}
By the Bishop-Gromov volume comparison,
there is a constant $C_{4,n}>0$ depending only on $n$ such that
the multiplicity of $\{B_{20\,R}(x_{i})\}_{i \in I}$ is at most $C_{4,n}$.
Furthermore,
the standard covering argument tells us that
\begin{align}\label{eq:covering estimate}
\sum_{i\in I} \,\int_{B_{20R}(x_{i})}\,\Vert \nabla \varphi_{\lambda}\Vert^{2}\,dv_g&\leq C_{4,n}\int_{M}\,\Vert \nabla \varphi_{\lambda}\Vert^{2}\,dv_g\\ \notag
                                                                                                                                  &  =  C_{4,n}\,\lambda\,\int_{M}\, \varphi_{\lambda}^{2}\,dv_g.
\end{align}
We define $J\subset I$ by the set of all $i\in I$ satisfying
\begin{equation}\label{eq:integral bound}
\int_{B_{20R}(x_{i})}\,\Vert \nabla \varphi_{\lambda}  \Vert^{2}\,dm_{B_{20R}(x_{i})} \leq C_{4,n}\,\frac{\lambda}{\xi}\,\int_{M}\,\varphi^{2}_{\lambda}\, dm_g,
\end{equation}
and $J':=I\setminus J$.
Note that
if $i\in J$,
then we deduce
\begin{equation}\label{eq:gradient bound}
\sup_{B_{10R}(x_{i})}\,\Vert \nabla \varphi_{\lambda} \Vert^{2}\leq C_{5,n}\,\Lambda
\end{equation}
from (\ref{eq:application of gradient estimate}) and (\ref{eq:integral bound}),
where $C_{5,n}:=(C_{3,n}\,C_{4,n})^{\frac{1}{2}}$ and $\Lambda:=(\lambda\,\xi^{-1})^{\frac{1}{2}}\,\Vert \varphi_{\lambda}\Vert_{2}$.
We set
\begin{equation*}
\Omega:=\bigcup_{i \in J}\,B_{3R}(x_{i}),\quad \Omega':=\bigcup_{i \in J'}\,B_{3R}(x_{i}).
\end{equation*}
We will verify that
$\Omega$ is a desired Borel subset.
By the definition of $J'$,
\begin{equation*}
C_{4,n}\,\frac{\lambda}{\xi}\, \int_{M}\,\varphi_{\lambda}^{2}\,dv_g\,\sum_{i \in J'}\,m_g(B_{20R}(x_{i})) \leq \sum_{i \in J'}\int_{B_{20R}(x_{i})}\,\Vert \nabla \varphi_{\lambda}\Vert^{2} \,dv_g.
\end{equation*}
This together with (\ref{eq:covering estimate}) implies $\sum_{i \in J'}\,m_g(B_{20R}(x_{i}))\leq \xi$,
and hence $m_g(\Omega')\leq \xi$.
On the other hand,
$M=\Omega \cup \Omega'$ since (\ref{eq:maximality}).
Therefore,
one can conclude $m_g(\Omega)\geq 1-\xi$.

Now,
we check the inclusion (\ref{eq:neighborhood of nodal set}) for $C_{n}=C_{5,n}$.
For $r>0$,
we fix $x\in B_{r}(\varphi^{-1}_{\lambda}(0))\cap \Omega$.
Then the following hold:
\begin{enumerate}\setlength{\itemsep}{+1.5mm}
\item there exists $x_{0} \in M$ with $\varphi_{\lambda}(x_{0})=0$ such that $d_g(x,x_{0})\leq r$; \label{item:refined vanish on boundary}
\item there is $i_{0}\in J$ with $\varphi_{\lambda}(x_{i_{0}})=0$ such that $d_g(x,x_{i_{0}})\leq 3R$. \label{item:refined gradient estimate}
\end{enumerate}
Let us consider the case where $r \in (0,3R]$.
We take a minimal geodesic $\gamma:[0,d_g(x,x_{0})]\to M$ from $x$ to $x_{0}$.
Using the triangle inequality and $d_g(x,x_{0})\leq 3R$,
we see that
$\gamma$ lies in $B_{10R}(x_{i_{0}})$.
By $\varphi_{\lambda}(x_{0})=0$,
the Cauchy-Schwarz inequality,
$d_g(x,x_{0})\leq r$ and (\ref{eq:gradient bound}),
we obtain
\begin{align*}
\vert \varphi_{\lambda}(x) \vert &\leq \int^{d_g(x,x_{0})}_{0}\, \Vert \nabla \varphi_{\lambda} \Vert(\gamma(t)) \,dt \leq r\,\sup_{B_{10R}(x_{i_{0}})}\,\Vert \nabla \varphi_{\lambda} \Vert \leq C_{5,n}\,\Lambda\,r,
\end{align*}
and this proves (\ref{eq:neighborhood of nodal set}).
When $r \in (3R,\infty)$,
one can prove (\ref{eq:neighborhood of nodal set}) by taking a minimal geodesic from $x$ to $x_{i_{0}}$,
and by a similar argument to that in the case where $r \in (0,3R]$.
We complete the proof.
\end{proof}

We are now in a position to prove Theorem \ref{thm:eigenfunction concentration}.
\begin{proof}[Proof of Theorem \ref{thm:eigenfunction concentration}]
We assume $\ric_g \geq -(n-1)$.
By Lemma \ref{lem:neighborhood of nodal set},
there is a constant $\mathcal{C}_{n}>0$ depending only on $n$ such that
the following holds:
If we have $\lambda \geq \mathcal{C}_{n}$,
then for every $\xi \in (0,1)$,
there exists a Borel subset $\Omega=\Omega_{\varphi_{\lambda},\xi} \subset M$ with $m_g(\Omega) \geq 1-\xi$ such that
for every $r>0$
\begin{equation}\label{eq:second neighborhood of nodal set}
\Omega \cap B_{r}(\varphi^{-1}_{\lambda}(0)) \subset \Omega \cap \left\{ \vert \varphi_{\lambda} \vert \leq  C_{n}\, \sqrt{\frac{\lambda}{\xi}}\,   \Vert \varphi_{\lambda}\Vert_{2}  \,r  \right\},
\end{equation}
where $C_{n}>0$ is a constant depending only on $n$.

By (\ref{eq:second neighborhood of nodal set}),
for every $r>0$
we see
\begin{equation*}
\Omega \cap B_{(C_{n}\,\Lambda)^{-1}\,r}(\varphi^{-1}_{\lambda}(0)) \subset \Omega \cap \left\{ \vert \varphi_{\lambda} \vert \leq r  \right\},
\end{equation*}
where $\Lambda:=(\lambda\,\xi^{-1})^{\frac{1}{2}}\,\Vert \varphi_{\lambda}\Vert_{2}$.
It follows that
\begin{align*}
m_g\bigl(\Omega \cap \{\vert \varphi_{\lambda} \vert >r\}\bigl) &\leq m_g\bigl(\Omega \setminus B_{(C_{n}\,\Lambda)^{-1}\,r}(\varphi^{-1}_{\lambda}(0)) \bigl)\\
                                                                                          &\leq m_g\bigl(M \setminus B_{(C_{n}\,\Lambda)^{-1}\,r}(\varphi^{-1}_{\lambda}(0)) \bigl).
\end{align*}
Due to Theorem \ref{thm:nodal concentration},
we arrive at the desired inequality
\begin{align*}
m_g\bigl(\Omega \cap \{\vert \varphi_{\lambda} \vert >r\}\bigl) &\leq \exp\bigl(1-\sqrt{\lambda}\,(C_{n}\,\Lambda)^{-1}\,r\bigl)\\
                                                                                          &  =   \exp\left(1-\frac{C^{-1}_{n}\,\sqrt{\xi}}{\Vert \varphi_{\lambda} \Vert_{2} }\,r \right).
\end{align*}
This completes the proof of Theorem \ref{thm:eigenfunction concentration}.
\end{proof}

\subsection{Proof of Corollary \ref{cor:lp concentration}}
We finally prove Corollary \ref{cor:lp concentration}.
\begin{proof}[Proof of Corollary \ref{cor:lp concentration}]
Let $p\in[1,\infty)$.
We assume $\ric_g \geq -(n-1)$.
By Theorem \ref{thm:nodal concentration},
there exists a constant $\mathcal{C}_{n}>0$ depending only on $n$ such that
the following holds:
If $\lambda \geq \mathcal{C}_{n}$,
then for every $\xi \in (0,1)$,
there exists a Borel subset $\Omega=\Omega_{\varphi_{\lambda},\xi} \subset M$ with $m_g(\Omega) \geq 1-\xi$ such that
\begin{equation}\label{eq:main theorem2}
m_g\bigl(\Omega \cap \{\vert \varphi_{\lambda} \vert >r \}\bigl)\leq \exp \left( 1-\frac{C_{n}\, \sqrt{\xi}}{\Vert \varphi_{\lambda} \Vert_{2}}\,r \right)
\end{equation}
for every $r>0$,
where $C_{n}>0$ is a constant depending only on $n$.

The Cavalieri principle yields
\begin{equation*}\label{eq:basic equality}
\int_{\Omega}\,\vert \varphi_{\lambda} \vert^{p}\,d\,m_g=\int^{\infty}_{0}\,m_g(\Omega \cap \{ \vert \varphi_{\lambda} \vert^{p}>t    \} )\,dt.
\end{equation*}
By letting $r:=t^{\frac{1}{p}}$,
and by change of variables,
we have
\begin{equation*}
\int_{\Omega}\,\vert \varphi_{\lambda} \vert^{p}\,d\,m_g=p\,\int^{\infty}_{0}\,m_g(\Omega \cap \{ \vert \varphi_{\lambda} \vert>r    \} )\,r^{p-1}\,dr.
\end{equation*}
Using (\ref{eq:main theorem2}),
and change of variables again,
we arrive at
\begin{align*}
\int_{\Omega}\,\vert \varphi_{\lambda} \vert^{p}\,d\,m_g &\leq e\,p\,\int^{\infty}_{0}\, \exp \left( -\frac{C_{n} \sqrt{\xi}}{\Vert \varphi_{\lambda} \Vert_{2}}\,r \right)  \,r^{p-1}\,dr\\
                                                                                 &  =  e\,p\, \left(  \frac{\Vert \varphi_{\lambda} \Vert_{2}}{C_{n}\, \sqrt{\xi}} \right)^{p}\,   \int^{\infty}_{0}\, \exp \left( -s \right)  \,s^{p-1}\,ds,
\end{align*}
where $s:=(C_{n}\, \sqrt{\xi}\, \Vert \varphi_{\lambda} \Vert^{-1}_{2})\,r$.
Hence
we have
\begin{equation*}
\left(\int_{\Omega}\,\vert \varphi_{\lambda} \vert^{p}\,d\,m_g\right)^{\frac{1}{p}}
\leq e^{\frac{1}{p}}\,\bigl(p\,\Gamma(p)\bigl)^{\frac{1}{p}} \,\frac{\Vert \varphi_{\lambda} \Vert_{2}}{C_{n}\, \sqrt{\xi}}
\leq e\,\Gamma(p+1)^{\frac{1}{p}}\,  \frac{\Vert \varphi_{\lambda} \Vert_{2}}{C_{n}\, \sqrt{\xi}}.
\end{equation*}
Thus,
we complete the proof of Corollary \ref{cor:lp concentration}.
\end{proof}


\bigbreak
\noindent
{\it Acknowledgments.} The authors would like to thank the anonymous referee
and Professor Mayukh Mukherjee for their helpful and useful comments.

\end{document}